\newtheorem{satz}{Theorem}
\newtheorem{proposition}[satz]{Proposition}
\newtheorem{theorem}[satz]{Theorem}
\newtheorem{lemma}[satz]{Lemma}
\newtheorem{corollary}[satz]{Corollary}
\newtheorem{remark}[satz]{Remark}
\def\sbeq{\subseteq}
\def\Z{\mathbb {Z}}
\def\F{\mathbb {F}}
\def\Fp{\mathbb {F}_p}
\def\a{\alpha}
\def\d{\delta}
\def\g{\gamma}
\def\G{\Gamma}
\def\b{\beta}
\def\le{\leqslant}
\def\ge{\geqslant}
\def\_phi{\varphi}
\def\eps{\varepsilon}
\def\Gr{{\mathbf G}}
\def\FF{\widehat}
\def\ov{\overline}
\def\Spec{{\rm Spec\,}}
\def\t{\theta}
\def\D{\Delta}
\def\sf{\mathsf{sf}}
\begin{document}

\author{Tomasz Schoen\footnote{The author is supported by National Science Centre, Poland grant 2019/35/B/ST1/00264}, Ilya D. Shkredov
\footnote{	{\bf Keywords} : sum--free sets, L--functions, multiplicative subgroups 
	MSC 2010 : 11B13, 11B50, 11B75, 11M06} 
}
\title{ $L$--functions and sum--free sets
}
\date{}
	\maketitle

\begin{abstract}
	For  set $A\subset \F_p^*$ define by $\sf(A)$ the size of the largest sum--free subset of $A.$ Alon and Kleitman  \cite{AK}
	showed 
	that  $\sf (A) \ge |A|/3+O(|A|/p).$ We prove that if 
	$\sf(A)-|A|/3$ is small  then the set $A$ must be  
	uniformly distributed on cosets of each  large multiplicative subgroup. 
	Our argument relies on  irregularity of distribution of multiplicative   subgroups on certain   intervals in $\Fp$.  
\end{abstract}

\centerline{\it  Dedicated to the 80th birthday of Endre Szemer\'edi }

\section{Introduction}

Let $\Gr$ be  an additive group. For a finite set $A\sbeq \Gr \setminus\{e_\Gr\}$ denote by $\sf_k^{\Gr} (A)$ the maximal cardinality of a subset of $A$  without any solution to the equation 
\begin{eqnarray}\label{def:sf_k}
	x_1 + \dots + x_k = y \,.
\end{eqnarray}
If $k=2$, then sets without solutions to (\ref{def:sf_k}) are called {\it sum--free}  and hence 
 $\sf^\Gr_2 (A)$ is just the maximal size of a sum--free subset of $A$. 
Erd\H{o}s proved \cite{Erdos} that for every finite set $A\sbeq \Z\setminus\{0\}$ we always have 
$$\sf^\Z_2 (A) \ge |A|/3\,.$$ 
This estimate was slightly improved by Alon and Kleitman  \cite{AK}:  $\sf^\Z_2 (A) \ge (|A|+1)/3.$ In fact, they showed that $\sf^{\F_p}_2 (A) \ge (|A|+1)/3$ for any 
$A\sbeq \F_p^*$ provided that $p$ is a prime of the form $3k+2.$ The best known result was obtained by \cite{Bourgain} by Bourgain, who proved
  $$\sf^\Z_2(A) \ge (|A|+2)/3$$
for $n\ge 3$ using sophisticated Fourier analytical argument.
A sequence of results (see for example \cite{AK}, \cite{Lewko}, \cite{Alon}) provides upper estimates on $\sf^\Z_2(A).$ However, a breakthrough theorem was obtained by
Eberhard, Green and Manners \cite{EGM} who showed that  the constant $1/3$ is optimal by using a very elaborate technique. In a subsequent paper Eberhard 
\cite{eberhard} gave a simpler proof of this result that also holds for every $k\ge 2$ i.e. the optimal constant for $k-$sum-free sets is $1/(k+1).$ Although it is still unknown whether  
$\sf^\Z_2 (A) = |A|/3 + \psi (|A|)$ for $\psi (|A|) \to +\infty$ as $|A| \to +\infty$  it is widely believed to be true. 
Let us also mention that Bourgain \cite{Bourgain} showed  
$$
	\sf^\Z_3 (A) \ge \frac{|A|}{4}+ \frac{c \log |A|}{\log \log |A|} \,, 
$$
where $c>0$ is an absolute constant.

Our main purpose  is to  study the  quantity $\sf^{\Fp}_2 (A)$ for $A\subseteq \F_p^*$ which we  denote by  $\sf (A)$. 
The  main result concerns distribution of sets with small value of  $\sf (A)-|A|/3$. We show that such sets are equally distributed among cosets of multiplicative subgroups of $\Fp^*.$ 
We formulate below a  
slightly weaker theorem, which is directly implied by Theorem \ref{t:disc_sf} and Theorem \ref{t:disc_sf_G}. 

\begin{theorem}
	Let $p>3$ be a prime number and  let $\G \subseteq \F_p^*$ be a multiplicative subgroup with
$-1\in \G$. Let $A$ be a subset of $\F_p$  and suppose that $\sf(A) = |A|/3 + \psi$. 
Then for any $\eps>0$  one has 
\begin{equation}\label{f:disc_sf_G_intr}
	\sum_{\xi \in \F_p^*/\G} \left| |A \cap \xi \G| - \frac{|A||\G|}{p-1} \right|^2  \ll_\eps  \psi^2 p^{1+\eps} \,.
\end{equation} 
\label{t:main_intr}
\end{theorem}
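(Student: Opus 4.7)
The plan is a contrapositive argument: a large $L^2$ discrepancy of $A$ across cosets of $\Gamma$ will be converted into a matching lower bound on $\sf(A) - |A|/3 = \psi$. The guiding principle is that dilates of any sum-free set are sum-free, so for a maximal sum-free interval $J \subset \F_p$ (of length $\approx (p-1)/3$) and any $t \in \F_p^*$, the intersection $A \cap tJ$ is a sum-free subset of $A$ and $\sf(A) \ge |A \cap tJ|$. The hypothesis $-1 \in \G$ makes $J$ symmetric under the involution $x \mapsto -x$, which aligns $J$ with the $\G$-action so that the subsequent averaging over $\G$ is natural.

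First I would average this pointwise bound over $t \in \G$ and rewrite the right-hand side as a bilinear form on cosets of $\G$:
\begin{equation*}
	\frac{1}{|\G|}\sum_{t\in\G}|A\cap tJ|
	= \frac{|A||J|}{p-1} + \frac{1}{|\G|}\sum_{\xi\in\F_p^*/\G} f(\xi)\,g(\xi),
\end{equation*}
where $f(\xi) = |A \cap \xi\G| - |A||\G|/(p-1)$ and $g(\xi) = |J \cap \xi\G| - |J||\G|/(p-1)$. The main term already yields $\sf(A) \ge |A|/3 + O(1)$, recovering Alon--Kleitman. To extract the $L^2$ discrepancy $\sum_\xi f(\xi)^2$ itself rather than just a bilinear pairing, I would pass to a second-moment version: expanding the variance of $|A \cap tJ|$ over $t \in \G$ factors through $\sum_\xi f(\xi)^2$ multiplied by a second moment of the $\G$-distribution of $J$. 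Since $\sf(A) \ge \max_{t\in\G}|A\cap tJ|$ dominates the mean plus standard deviation of this random variable, any lower bound on the variance translates into a lower bound on $\psi$. I expect Theorem \ref{t:disc_sf} to be precisely this quantitative reduction, giving an inequality roughly of the form $\psi \gg (\sum_\xi f(\xi)^2)^{1/2}\cdot(\text{spreading of } \G \text{ on } J)$.

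To close the argument one needs a lower bound on the irregularity of $\G$ on $J$, i.e. on incomplete character sums $\sum_{x \in J}\chi(x)$ for nontrivial characters $\chi$ that are trivial on $\G$. This is the technical heart advertised in the abstract, and I expect Theorem \ref{t:disc_sf_G} to supply it via $L$-function methods: a Perron-type representation of $\sum_{x \in J}\chi(x)$ followed by a contour shift near $s = 1$ relates the character sum to values of $L(s,\chi)$, yielding a lower bound at the cost of only a factor $p^{\eps}$ (hence the $p^{1+\eps}$ on the right-hand side of \eqref{f:disc_sf_G_intr}). The main obstacle is exactly this character-sum \emph{lower} bound: Polya--Vinogradov and Burgess go in the opposite direction, so the required irregularity cannot be extracted from standard upper estimates and must be produced by the Dirichlet-series machinery of the title, with the $p^{\eps}$ loss seemingly inherent to effective information at $s = 1$.
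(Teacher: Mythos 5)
Your overall architecture is the paper's: dilate the sum--free interval $I=[p/3,2p/3)$, average $|xA\cap I|$ over a coset $\a\G$ to get $\frac{|A||I|}{p-1}+|\G|^{-1}\sum_\xi a_\xi\D_{\a\xi}$ with $a_\xi=|A\cap\xi\G|-\frac{|A||\G|}{p-1}$ and $\D_\xi=|\xi\G\cap I|-\frac{|I|}{n}$, and then feed in lower bounds for $L$--values at $s=1$ via the exact identity $\sum_{p/3\le x\le 2p/3}\chi(x)=-\frac{\sqrt3}{2\pi}G(\chi)L(1,\ov\chi\chi_3)(1+\ov\chi(-1))$; your final paragraph also correctly identifies that one needs a lower bound on $\sum_{x\in I}\chi(x)$ for \emph{each} nontrivial $\chi$ trivial on $\G$ (in the paper: Siegel for the real character and $|L(1,\chi)|\gg 1/\log p$ for complex ones). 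Note, however, that your ``second--moment'' heuristic is imprecise as stated: the variance of the coset averages equals $\frac{1}{n}\sum_{\chi\in\mathcal X^*}|\h a(\chi)|^2|\h\D(\chi)|^2$, a character-by-character product, so it does not factor as $\sum_\xi a_\xi^2$ times one scalar ``spreading'' of $\G$ on $I$; the per-character lower bound is genuinely required, and this is where Theorem~\ref{t:main_intr} inherits the hypothesis $-1\in\G$ (the factor $1+\ov\chi(-1)$ kills odd characters, and $-1\in\G$ forces every $\chi\in\mathcal X$ to be even).

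The genuine gap is in your mechanism for converting the sum--free hypothesis into variance control. The claim ``$\sf(A)\ge\max_\a$ dominates the mean plus the standard deviation'' is false: the sum--free property gives only the \emph{one-sided} bound $\sum_\xi a_\xi\D_{\a\xi}\le\psi|\G|+O(|A||\G|/p)$ for every $\a$, and a mean-zero sequence whose positive parts are small can still have huge variance (the large deviations may all be negative); indeed, if your step worked it would yield something like $\sum_\xi a_\xi^2\ll_\eps\psi^2|\G|p^{\eps}$, much stronger than \eqref{f:disc_sf_G_intr}, which is a warning sign. The paper closes this with an extra device you are missing: since $\sum_\a\sum_\xi a_\xi\D_{\a\xi}=0$, the one-sided bound upgrades to the $L^1$ estimate $\sum_\a|\sum_\xi a_\xi\D_{\a\xi}|\ll\psi p$; then $L^2\le L^1\cdot L^\infty$, where the $L^\infty$ factor is controlled by Cauchy--Schwarz and the \emph{upper} bound $\sum_\xi\D_\xi^2\ll p\log^2p$ coming from $|L(1,\chi)|\ll\log p$. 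Comparing the resulting upper bound $\sum_\a|\sum_\xi a_\xi\D_{\a\xi}|^2\ll\psi\bigl(\sum_\xi(a_\xi+a_{-\xi})^2\bigr)^{1/2}p^{3/2}\log p$ with the Siegel-based lower bound $\gg_\eps p^{1-\eps}\sum_\xi(a_\xi+a_{-\xi})^2$ gives $\sum_\xi(a_\xi+a_{-\xi})^2\ll_\eps\psi^2p^{1+3\eps}$, which is exactly \eqref{f:disc_sf_G_intr} since $a_\xi=a_{-\xi}$ when $-1\in\G$; this $L^1$--$L^\infty$ interpolation is also why the final bound is $\psi^2p^{1+\eps}$ and not better. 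A minor further point: a Perron/contour-shift representation alone cannot produce the needed lower bound; the input is Siegel's (ineffective) theorem together with the classical complex-character bound, as you partly acknowledged.
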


It immediately  follows from Theorem \ref{t:main_intr} that if $A$ is a subset of quadratic residues then it contains a sum--free subset of size $|A|/3 + \Omega_\eps (|A| p^{-1/2-\eps})$, which improves   Bourgain's bound for $|A| \gg_\eps p^{1/2+2\eps}$. 


\bigskip 

\noindent {\it Acknowledgment.} We would like to thank Alexander Kalmynin for very useful  conversation about distribution of quadratic residues and Mateusz for linguistic  correction of the manuscript.

\section{Definitions and preliminaries}

Let $p$ be an odd  prime number.	
We denote by 
$\mathcal{Q}, \mathcal{N} \subseteq \F^*_p$  the set of quadratic residues and
 quadratic non--residues, respectively.
Let $\rho(x):=\left(\frac{x}{p}\right)$ be the Legendre symbol of $x \in \F_p$.
We denote by $\chi_0$ the principal (trivial) multiplicative character modulo $p$. 
Given a non--trivial character $\chi$, we write 
\[
	G(\chi) = \sum_{x \in \F_p} \chi(x) e^{2\pi i x/p} 
\]
for the corresponding {\it Gauss sum} and let 
\[
	L(s,\chi) = \sum_{m=1}^{\infty} \frac{\chi(m)}{m^s} 
\] 
be the {\it Dirichlet series} at point $s\in \mathbb{C}$.
By the famous result of Siegel for real multiplicative characters $\chi$, see, e.g. \cite{Montgomery-Vaughan} or \cite{Goldfeld_short}, 
we know that $L(1,\chi) \ge C_\eps p^{-\eps}$, where $C_\eps>0$ is an inefective constant. 
The best known effective lower bound  is $L(1,\chi) \gg \log p/\sqrt{p}$, see \cite{Goldfeld_1976}, \cite{GZ} and an excellent  survey \cite{Goldfeld_1985}.
Using partial summation method one can show a general inequality $|L(1,\chi)| \ll \log p$ which holds for any non--trivial character $\chi$. 
Furthermore, it is well--known that $|G(\chi)| = \sqrt{p}$ and 
$$
	\sum_{x \in \F_p} \chi(x) e^{2\pi i m x/p} = \ov{\chi}(m) G(\chi) \,. 
$$
	Using the Fourier expansion it is not hard to obtain a formula for sums with a character $\chi$ over the interval $[1,\a p]$, where  $\a \in (0,1)$, namely,  
\begin{equation}\label{f:S_a_expansion_gen}
S (\a) := \sum_{1 \le x \le \a p} \chi(x) = \frac{G(\chi)}{2\pi i} \sum_{m \neq 0} \frac{\ov{\chi} (m)}{m}  (1-e^{-2\pi i m \a} )  \,. 
\end{equation}

Let $\G \subseteq \F_p^*$ be a multiplicative subgroup and put $n=(p-1)/|\G|$.
Define by $\mathcal{X}$  the set of all  characters $\chi$ such that $\chi^n = \chi_0$, 
and put $\mathcal{X}^* = \mathcal{X} \setminus \{\chi_0\}$.
Note that  a character  $\chi$ belongs to $\mathcal{X}$ if and only if $\chi$ equals one on $\G$.  
For  interval $I= [p/3, 2p/3) \subset \F_p$ and  $\xi \in \F_p^*/\G$ put 
\[
\D_\xi = \D_\xi (\G)  = |\xi \G \cap I| - \frac{|I|}{n} \,.
\]
Clearly,  $\sum_\xi \D_\xi = 0$.
From now on
we do not 
underline 
that the summation over $\xi$ is taken over $\xi \in \F_p^*/\G$.  
Since $[p/3,2p/3) = -[p/3,2p/3)$ modulo $p$ it follows that $\D_{-\xi} = \D_{\xi}$. 

The notation $A(\cdot)$ always means the characteristic function of   set $A\subseteq \F_p.$ 
We denote the Fourier transform of  a function  $f : \Fp \to \mathbb{C}$ by~$\FF{f},$ 
\begin{equation}\label{F:Fourier}
\FF{f}(\xi) =  \sum_{x \in \Fp} f(x) e^{-2\pi i \xi x/p}\,.
\end{equation}
The Parseval formula states that
\begin{equation}\label{F_Par}
\sum_{x\in \Fp} |f(x)|^2
=
\frac{1}{p} \sum_{\xi \in \Fp} |\widehat{f} (\xi)|^2 \,.
\end{equation}
All logarithms are to base $2$ and the signs $\ll, \gg$ are the usual Vinogradov symbols.

\section{The case of quadratic residues}

We begin  with a result on  irregularity of the distribution of  quadratic  residues on the largest sum-free subset of $\F_p$.

\begin{lemma}
	Let $p$ be a prime number, $p>3$ and $p \equiv 1 \pmod 4$.  
	Then for any $\eps \in (0,1/2)$ there is a constant  $C_\eps >0$ such that 
\begin{equation}\label{f:discrepancy_Q}
	|\mathcal{Q} \cap [p/3, 2p/3)| - \frac{p}{6} \le - C_\eps p^{1/2-\eps} \,.
\end{equation}
	Furthermore, if $p\equiv 3 \pmod 4$ then
	\begin{equation}\label{f:discrepancy_Q-1}
	|\mathcal{Q} \cap [p/8, 3p/8)| - \frac{p}{8} \le - C_\eps p^{1/2-\eps} \,.
\end{equation}
\label{l:discrepancy_Q-1}
\end{lemma}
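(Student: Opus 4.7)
The strategy is classical: express the counting problem as a character sum $S(\alpha) = \sum_{1 \le x \le \alpha p} \rho(x)$, apply the Fourier expansion~(\ref{f:S_a_expansion_gen}), identify the resulting sum with a Dirichlet $L$-value at $s=1$, and invoke Siegel's theorem to extract the required polynomial saving.

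Using $|\mathcal{Q} \cap [1,\alpha p]| = (\lfloor \alpha p\rfloor + S(\alpha))/2$, I would first observe
$$|\mathcal{Q} \cap [p/3, 2p/3)| - p/6 = \tfrac{1}{2}\bigl(S(2/3) - S(1/3)\bigr) + O(1).$$
Applying~(\ref{f:S_a_expansion_gen}) with $\chi = \rho$ and subtracting gives
$$S(2/3) - S(1/3) = \frac{G(\rho)}{2\pi i}\sum_{m \ne 0}\frac{\rho(m)}{m}\bigl(e^{-2\pi i m/3} - e^{-4\pi i m/3}\bigr),$$
and the elementary identity $e^{-2\pi i m/3} - e^{-4\pi i m/3} = -i\sqrt 3\,\chi_3(m)$, where $\chi_3$ is the non-principal character modulo $3$ (extended by $\chi_3(m)=0$ for $3\mid m$), reduces the inner sum to $-i\sqrt 3\sum_{m \ne 0}\rho(m)\chi_3(m)/m$. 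The product $\eta := \rho\chi_3$ is a real non-principal Dirichlet character modulo $3p$; for $p \equiv 1 \pmod 4$ one has $\eta(-1) = \rho(-1)\chi_3(-1) = -1$, so $\eta$ is odd, and pairing $m \leftrightarrow -m$ collapses the bilateral sum to $2L(1,\eta)$. Combined with $G(\rho)=\sqrt p$ this yields
$$S(2/3)-S(1/3) = -\frac{\sqrt{3p}}{\pi}\,L(1,\rho\chi_3),$$
and Siegel's theorem (quoted in the preliminaries) supplies $L(1,\rho\chi_3) \ge C_\eps (3p)^{-\eps} \gg p^{-\eps}$, positivity of $L(1,\eta)$ being classical for real non-principal $\eta$. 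Thus (\ref{f:discrepancy_Q}) follows.

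The second inequality is handled analogously with eighth roots of unity in place of cube roots. Expanding $e^{-\pi i m/4} - e^{-3\pi i m/4}$ by residue classes modulo $8$, the odd-$m$ part of the inner sum turns out to be a multiple of $\sqrt 2\sum_{m \ne 0}\rho(m)\chi_8(m)/m$ with $\chi_8$ the quadratic character mod $8$, while the $m\equiv 2\pmod 4$ part, after $m = 2k$, becomes a multiple (involving $\rho(2)$) of $\sum_{k \ne 0}\rho(k)\chi_{-4}(k)/k$. For $p \equiv 3\pmod 4$, the character $\rho\chi_8$ is odd while $\rho\chi_{-4}$ is even, so the $\pm m$ pairing kills the latter and converts the former to $2L(1,\rho\chi_8)$. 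The factor $G(\rho)=i\sqrt p$ absorbs the surviving $i$, leaving a real multiple of $L(1,\rho\chi_8)$ to which Siegel's theorem applies once more.

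The main obstacle is essentially bookkeeping: the trigonometric coefficients must be recognised as honest Dirichlet characters so that the Fourier sum collapses cleanly to a single $L$-value, and the parities and Gauss-sum phases must be tracked so that the $L$-value appearing at the end is real, non-principal, and of the parity that survives (rather than vanishes under) the $m \leftrightarrow -m$ pairing. Once these algebraic manipulations are carried out, the genuine depth of the argument is imported wholesale from Siegel's ineffective lower bound on $L(1,\chi)$ for real characters.
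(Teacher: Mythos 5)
Your argument is essentially the paper's: expand the incomplete character sum via the P\'olya--Fourier formula \eqref{f:S_a_expansion_gen}, recognise the trigonometric coefficients as Dirichlet characters ($\chi_3$ in the first case, $\chi_8$ together with the odd character mod $4$ in the second), collapse the bilateral sum to $2L(1,\cdot)$ by parity, and finish with Siegel's theorem. For \eqref{f:discrepancy_Q} the paper first uses $\mathcal{Q}=-\mathcal{Q}$ (valid since $p\equiv 1 \pmod 4$) to reduce to $\sum_{1\le x<p/3}\left(\frac{x}{p}\right)$ and then quotes Wright's formula \eqref{f:Legendre_p/3}, whereas you compute $S(2/3)-S(1/3)$ directly from \eqref{f:S_a_expansion_gen}; both routes give $-\frac{\sqrt{3p}}{2\pi}L(1,\rho\chi_3)+O(1)$, and your parity and Gauss-sum bookkeeping there is correct, so the first inequality is fine.

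The one point you leave open is the sign in the second part: you end with ``a real multiple of $L(1,\rho\chi_8)$'' while \eqref{f:discrepancy_Q-1} asserts a specific (negative) sign. If you push your computation through (the odd-$m$ coefficient is $\sqrt2\,\chi_8(m)$ and the even part dies by parity), you get
$S(3/8)-S(1/8)=\frac{\sqrt{2p}}{\pi}\,L(1,\rho\chi_8)$ with $L(1,\rho\chi_8)>0$, which is exactly the identity displayed in the paper's own proof; hence $|\mathcal{Q}\cap[p/8,3p/8)|-p/8$ comes out \emph{positive} of size $\gg_\eps p^{1/2-\eps}$, i.e.\ with the opposite sign to the printed inequality. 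This is a defect shared by the paper (its proof also just says ``apply Siegel'' without checking the sign), and it is harmless downstream, since the proof of Theorem \ref{t:disc_sf} only needs the discrepancy on $J$ to have a known fixed sign and magnitude $\gg_\eps p^{1/2-\eps}$. Still, a complete write-up should determine that sign explicitly rather than leave it implicit, and should note that the second estimate holds with $+C_\eps p^{1/2-\eps}$ rather than $-C_\eps p^{1/2-\eps}$.
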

\begin{proof} 
	Put $\sigma:=|\mathcal{Q} \cap [p/3, 2p/3)| - \frac12(\lfloor 2p/3\rfloor-\lfloor p/3\rfloor).$ 
	Clearly, by
	$$|\mathcal{Q} \cap [p/3, 2p/3)|-|\mathcal{N} \cap [p/3, 2p/3)|=\sum_{p/3 \le x\le 2p/3} \left(\frac{x}{p}\right)$$
	and 
	$$|\mathcal{Q} \cap [p/3, 2p/3)|+|\mathcal{N} \cap [p/3, 2p/3)|=\lfloor 2p/3\rfloor-\lfloor p/3\rfloor$$
	we have
	$\sigma = \frac12 \sum_{p/3 \le x\le 2p/3} \left(\frac{x}{p}\right)$. 
	Note that since  $p \equiv 1 \pmod 4$ we have  $\mathcal{Q} = - \mathcal{Q}$, therefore  it follows that $\sum_{x\in \F_p} \left(\frac{x}{p}\right) = 0$ and
	$$\sigma = -\sum_{1\le x <p/3} \left(\frac{x}{p}\right).$$ 
	It is well--known \cite[Theorem 7.4(i)]{Wright.S} that 
\begin{equation}\label{f:Legendre_p/3}
	\sum_{1\le x <p/3} \left(\frac{x}{p}\right) = 
	\frac{\sqrt{3p}}{2\pi}  L(1, \rho \chi_3) \,,
\end{equation}
	where $L(1, \rho \chi_3)$ is the value of the Dirichlet series
	with product of the Legendre symbols $\rho$ and $\chi_3$  at $s=1$,
	where 
	$\chi_3$ is the   non--principal character modulo $3$
	(in other words $\chi_3$ is an integer function with period three such that $\chi_3 (1) = 1$, $\chi_3 (-1) = -1$ and $\chi_3 (0)=0$).   
	By Siegel's theorem  for real multiplicative characters $\chi$,  we know that $L(1,\chi) \gg_\eps p^{-\eps}$, which completes the proof of (\ref{f:discrepancy_Q}). 

	Next, assume that $p\equiv 3 \pmod 4$.
	Let $\chi_8$ be the  non--principal real character modulo $8$
	(in other words $\chi_8$ is an integer function with period eight such that $\chi_8 (1) = \chi_8 (-1) = 1$, $\chi_8 (3) = \chi_8 (-3) = -1$ and $\chi_8$ vanishes otherwise)
	 and let $\d_4$ be a function such that $\d_4 (n) = 1$ if and only if $n \equiv 4 \pmod 8$ and zero otherwise.  
	We have  
\begin{equation}\label{f:cos}
	\cos \frac{\pi n}{4} = \frac{\chi_8 (n)}{\sqrt{2}}  + \d_0 (n) - \d_4 (n) \,, \quad \quad 
	\cos \frac{3\pi n}{4} = - \frac{\chi_8 (n)}{\sqrt{2}} + \d_0 (n) - \d_4 (n)
\end{equation}
	By using the Fourier 
	expansion \eqref{f:S_a_expansion_gen} 
	of the characteristic function of  interval $[0,\a p]$ one can easily obtain 
\begin{equation}\label{f:S_a_expansion}
	S (\a) = \frac{\sqrt{p}}{\pi} \left( L(1, \rho) - \sum_{n\ge 1} \left(\frac{n}{p}\right) \frac{\cos 2\pi n \a}{n} \right) \,. 
\end{equation}
	By \eqref{f:cos} and \eqref{f:S_a_expansion}, we derive
\[
	S(3/8) - S(1/8) = \frac{\sqrt{2p}}{\pi} L(1, \rho \chi_8) \,.
\]
	Again, 
	to conclude the  proof it is enough to apply  Siegel's theorem.
 
$\hfill\Box$
\end{proof}

\bigskip

Now we are ready to prove our first 
main 
result. 
We make use of an observation that  if $\mathcal{Q}$ or $\mathcal{N}$ has a large sum--free set and since $\mathcal{Q}$ is a multiplicative subgroup then the same should be true for  {\it all} relatively large subsets of $\mathcal{Q}$ or $\mathcal{N}$.

\begin{theorem}
	Let $A\subseteq \F^*_p$, $p \equiv 1 \pmod 4$ and suppose that $\sf(A) = |A|/3 + \psi$. 
	Then for any $\eps>0$ one has 
	\begin{equation}\label{f:disc_sf}
	\big| \sum_{x\in A} \left( \frac{x}{p} \right) \big| \ll_\eps  \psi  p^{1/2+\eps}\, 
\end{equation}
	or equivalently $\left| |A\cap \mathcal{Q}| - |A\cap \mathcal{N}| \right| \ll_\eps  \psi  p^{1/2+\eps}.$
 Furthermore, if $p \equiv 3 \pmod 4$ and  $\sf_3 (A) = |A|/4+\psi$, then \eqref{f:disc_sf} holds as well.
	\label{t:disc_sf}
\end{theorem}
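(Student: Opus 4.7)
The plan is to exploit the fact that the interval $I=[p/3,2p/3)$ is a sum--free subset of $\F_p$, and so is every multiplicative dilate $tI$, $t\in\F_p^*$. Hence for any $s\in\F_p^*$ the set $sA\cap I$ is a sum--free subset of $sA$, and since multiplication by $s$ preserves sum--freeness, $\sf(sA)=\sf(A)$. This yields the pointwise bound $\sf(A)\ge|sA\cap I|$ for every $s\in\F_p^*$, and in particular $\sf(A)$ dominates the average of $|sA\cap I|$ over any non--empty subset of $\F_p^*$. The key choice is to average over the cosets $\mathcal{Q}$ and $\mathcal{N}$ of the quadratic subgroup and compare.

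Set $a=|A\cap\mathcal{Q}|$, $b=|A\cap\mathcal{N}|$, $q=|\mathcal{Q}\cap I|$, $n=|\mathcal{N}\cap I|$. Using that $x\mathcal{Q}=\mathcal{Q}$ or $\mathcal{N}$ according as $x\in\mathcal{Q}$ or $x\in\mathcal{N}$, a direct computation gives
\[
\E_{s\in\mathcal{Q}}|sA\cap I|=\frac{aq+bn}{(p-1)/2}\quad\text{and}\quad \E_{s\in\mathcal{N}}|sA\cap I|=\frac{an+bq}{(p-1)/2}.
\]
Taking the larger of the two and using $\max(u,v)=\tfrac12(u+v)+\tfrac12|u-v|$, I get
\[
\sf(A)\ge\frac{|A|\cdot|I|}{p-1}+\frac{|a-b|\,|n-q|}{p-1}.
\]
A quick case check modulo $3$ shows $|I|\ge (p-1)/3$, so the first summand is at least $|A|/3$. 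Combining with the hypothesis $\sf(A)=|A|/3+\psi$ rearranges to $\psi\ge|a-b|\,|n-q|/(p-1)$. Applying Lemma \ref{l:discrepancy_Q-1} to bound $|n-q|\gg_\eps p^{1/2-\eps}$ and noting $a-b=\sum_{x\in A}\rho(x)$ finishes the first claim.

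For the second claim ($p\equiv 3\pmod 4$ and $\sf_3$), I replace $I$ by $J=[p/8,3p/8)$. A short check shows $J$ is $3$--sum--free: three elements of $J$ lie in $[3p/8,9p/8)$, which modulo $p$ is disjoint from $J$, and the same then holds for every dilate $tJ$. I repeat the averaging argument verbatim, noting that $|J|\ge (p-1)/4$ now produces the baseline $|A|/4$, and I invoke the second part of Lemma \ref{l:discrepancy_Q-1}. The only mildly delicate point in the whole proof is identifying which coset to average over so that $n-q$ contributes with the correct sign; this is handled cleanly by taking the maximum of the two averages. Everything else is routine bookkeeping.
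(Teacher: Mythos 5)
Your proposal is correct and follows essentially the same route as the paper's proof: dilate $A$ by elements of $\mathcal{Q}$ and $\mathcal{N}$, bound the average of $|sA\cap I|$ (resp.\ $|sA\cap J|$) by $\sf(A)$ (resp.\ $\sf_3(A)$) using sum--freeness of the dilated intervals, and feed in the discrepancy bound of Lemma \ref{l:discrepancy_Q-1}. The only cosmetic differences are that you take the maximum of the two coset averages via $\max(u,v)=\tfrac12(u+v)+\tfrac12|u-v|$ instead of writing the two inequalities $\sigma_1,\sigma_2\le\sf(A)$ separately, and you absorb the paper's $O(1)$ term by checking $|I|\ge (p-1)/3$ and $|J|\ge (p-1)/4$ directly.
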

\begin{proof} 
	Let $A_\mathcal{Q} = A \cap \mathcal{Q}$, $A_\mathcal{N} = A \cap \mathcal{N}$ and put 
	 $I = [p/3,2p/3)$. Let us define $\D$ by 
	$$\D:=\frac12(\lfloor 2p/3\rfloor-\lfloor p/3\rfloor)-|\mathcal{Q} \cap I|=|\mathcal{N} \cap I|-\frac12(\lfloor 2p/3\rfloor-\lfloor p/3\rfloor).$$
	From the previous lemma we know that  $\D$ is positive and $\D \gg_\eps p^{1/2-\eps}$. 
	We have
\begin{eqnarray*}
	\sigma_1 &:=& |\mathcal{N}|^{-1} \sum_{x\in \mathcal{N}} |xA \cap I|   
	= 
		|\mathcal{N}|^{-1} \sum_{x\in \mathcal{N}} ( |xA_\mathcal{Q} \cap I| + |xA_\mathcal{N} \cap I|) \\
	&=& 
		\frac{|A_\mathcal{Q}|}{|\mathcal{N}|} |\mathcal{N} \cap I| +  \frac{|A_\mathcal{N}|}{|\mathcal{N}|} |\mathcal{Q} \cap I|=\frac{|A|}{3} + \D \frac{|A_\mathcal{Q}| - |A_\mathcal{N}|}{|\mathcal{N}|} \\
		&=&
		\frac{|A|}{3} + \frac{\D}{|\mathcal{N}|}\sum_{x\in A} \left( \frac{x}{p} \right) +O(1)\,.
\end{eqnarray*}
	Similarly, 
\begin{eqnarray*} 
\sigma_2 &:=& |\mathcal{Q}|^{-1} \sum_{x\in \mathcal{Q}} |xA \cap I|  
		= 
	|\mathcal{Q}|^{-1} \sum_{x\in \mathcal{Q}} ( |xA_\mathcal{Q} \cap I| + |xA_\mathcal{N} \cap I|) \\
	&=& 
\frac{|A_\mathcal{Q}|}{|\mathcal{Q}|} |\mathcal{Q} \cap I| +  \frac{|A_\mathcal{N}|}{|\mathcal{Q}|} |\mathcal{N} \cap I| 
=
\frac{|A|}{3} + \D \frac{|A_\mathcal{N}| - |A_\mathcal{Q}|}{|\mathcal{Q}|}\\
	&=&  \frac{|A|}{3} - \frac{\D}{|\mathcal{Q}|}\sum_{x\in A} \left( \frac{x}{p} \right)+O(1)\,.
\end{eqnarray*}
	However, $I$ is a sum--free set, hence  for any $x\in \F_p^*$, 
	$|xA \cap I|$ does not exceed $\sf(A)$. 
	Therefore, both $\sigma_1, \sigma_2$ are at most $\sf(A)$, which proves  \eqref{f:disc_sf}. 
	The second assertion can be shown in a similar manner  with the interval $J = [p/8, 3p/8)$, that has no solutions to $x_1+x_2+x_3=y$. 
	This completes the proof.
$\hfill\Box$
\end{proof}

\begin{corollary}
	Let  $p \equiv 1 \pmod 4$ and let $A\sbeq \mathcal{Q}$ or $A\sbeq \mathcal{N}.$  
	Then for every $\eps>0$ a positive constant $C_\eps$ exists such that
	\begin{equation}\label{f:sf-Q}
	\sf(A)\ge \frac{|A|}{3}+  C_\eps |A| p^{-1/2-\eps} \,.
\end{equation}
\label{c:sf}
\end{corollary}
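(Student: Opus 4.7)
The plan is to apply Theorem \ref{t:disc_sf} directly, exploiting the fact that on $\mathcal{Q}$ and $\mathcal{N}$ the Legendre symbol is constant and equal to $\pm 1$. The whole argument amounts to solving for $\psi$ in the inequality provided by that theorem.

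First, I would set $\psi := \sf(A) - |A|/3$, which is nonnegative by the Alon--Kleitman bound. Since $A \subseteq \mathcal{Q}$ or $A \subseteq \mathcal{N}$, the character sum evaluates as
$$
\Big| \sum_{x \in A} \left(\frac{x}{p}\right) \Big| = |A| \,.
$$
Plugging this into \eqref{f:disc_sf} from Theorem \ref{t:disc_sf} yields
$$
|A| \ll_\varepsilon \psi \, p^{1/2+\varepsilon} \,,
$$
so that $\psi \gg_\varepsilon |A| \, p^{-1/2-\varepsilon}$, which is precisely \eqref{f:sf-Q} after relabeling the constant.

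There is essentially no obstacle here: the corollary is a cosmetic consequence of Theorem \ref{t:disc_sf} once one observes that a set confined to a single coset of $\mathcal{Q}$ in $\F_p^*$ maximizes the character sum in absolute value. The only small point to note is that $p \equiv 1 \pmod 4$ is needed so that $\mathcal{Q} = -\mathcal{Q}$, which is the hypothesis under which Theorem \ref{t:disc_sf} was proved in its first form. No separate treatment of $A \subseteq \mathcal{N}$ is needed beyond taking the absolute value of the character sum.
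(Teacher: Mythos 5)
Your proposal is correct and is exactly the intended derivation: the paper states Corollary \ref{c:sf} as an immediate consequence of Theorem \ref{t:disc_sf}, using that $\big|\sum_{x\in A}\left(\frac{x}{p}\right)\big|=|A|$ when $A\sbeq \mathcal{Q}$ or $A\sbeq \mathcal{N}$, and then solving $|A|\ll_\eps \psi p^{1/2+\eps}$ for $\psi$. No further comment is needed.
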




\section{The case of arbitrary multiplicative subgroups}

The study of distribution of an arbitrary multiplicative subgroup  $\G$ on sum--free intervals must be handled in a different way.  It is difficult to determine the sign of the discrepancy of intersections of $\G$  the intervals $I$, $J$  as we did in Lemma  \ref{l:discrepancy_Q-1}.
Nevertheless, it is possible to find  $L_2$--norm of such discrepancy on cosets of $\G$. 
The intervals $I$ and $J$ suit our problem perfectly because we can deal with intervals that have rational number endpoints with small specific denominators, see discussion in \cite[Section 4]{Kalmynin}.

Let us recall that $n=(p-1)/|\Gamma|,$
$\mathcal{X}=\{\chi: \chi^n=\chi_0\},$
$\mathcal{X^*} = \mathcal{X} \setminus \{\chi_0\}$ and
$$\D_\xi = \D_\xi (\G)  = |\xi \G \cap I| - \frac{|I|}{n} \,.$$

\begin{proposition}
	Let $\G \subseteq \F_p^*$ be a multiplicative subgroup and let $\eps \in (0,1)$ be any number.   
	Then 
	\begin{equation}\label{f:discrepancy_G}
	p \log^2 p \gg \sum_{\xi} \D^2_\xi \gg_\eps \left(1 - \frac{2}{n} \right) p^{1-\eps}  \,.
	\end{equation}	
	Moreover, if $n \neq 2,4$ then
\begin{equation}\label{f:discrepancy_G_n}
	\sum_{\xi} \D^2_\xi \gg \frac{p}{\log^2 p}  \,.
\end{equation}
The above inequalities hold for the discrepancy on the interval $J$ provided that $-1\notin \G$. 
\label{p:discrepancy_G}	
\end{proposition}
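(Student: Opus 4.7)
The plan is to reduce everything to multiplicative character sums via the duality between cosets of $\G$ in $\Fp^*$ and characters in $\mathcal{X}$. Fourier inversion on the cyclic group $\Fp^*/\G$ expresses
$$\D_\xi = \frac{1}{n}\sum_{\chi \in \mathcal{X}^*}\ov{S(\chi)}\,\chi(\xi), \qquad S(\chi) := \sum_{x \in I}\chi(x),$$
and Parseval on $\Fp^*/\G$ then turns the quantity of interest into the identity
$$\sum_\xi \D_\xi^2 = \frac{1}{n}\sum_{\chi \in \mathcal{X}^*}|S(\chi)|^2.$$
From this identity the upper bound in \eqref{f:discrepancy_G} is immediate from the Polya--Vinogradov inequality $|S(\chi)| \ll \sqrt{p}\log p$ applied to each non-principal $\chi$.

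For the lower bound I need to estimate $|S(\chi)|$ from below for a substantial family of characters in $\mathcal{X}^*$. Since $I = -I \pmod p$, one has $S(\chi) = 0$ whenever $\chi$ is odd, so only \emph{even} characters contribute. For an even $\chi$, inserting the Fourier expansion \eqref{f:S_a_expansion_gen} into $S(\chi) = T(\chi, 2/3) - T(\chi, 1/3)$ and simplifying the phase factor through the elementary identity $e^{-2\pi i m/3} - e^{-4\pi i m/3} = -i\sqrt{3}\,\chi_3(m)$ yields
$$S(\chi) = -\frac{\sqrt{3}}{\pi}\,G(\chi)\,L(1,\ov{\chi}\chi_3),$$
in perfect analogy with the Legendre-symbol computation of Lemma~\ref{l:discrepancy_Q-1}. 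Combining $|G(\chi)|^2 = p$ with Siegel's estimate $|L(1, \ov{\chi}\chi_3)| \gg_\eps p^{-\eps}$ (valid because $\ov{\chi}\chi_3$ is non-principal modulo $3p$) gives $|S(\chi)|^2 \gg_\eps p^{1-\eps}$ for every even non-principal $\chi$. The number of such characters is $n-1$ if $-1 \in \G$ and $n/2 - 1$ if $-1 \notin \G$; in both cases it is at least $(n-2)/2$, so summing and dividing by $n$ produces \eqref{f:discrepancy_G}.

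For the effective bound \eqref{f:discrepancy_G_n} when $n \neq 2, 4$, I would replace Siegel by the effective Dirichlet estimate $|L(1, \psi)| \gg 1/\log p$, which is available for \emph{complex} characters $\psi$. Since $\mathcal{X}$ is cyclic it contains at most one non-principal real character, and the hypothesis $n \neq 2, 4$ guarantees that a positive proportion of the even non-principal characters in $\mathcal{X}$ are complex, so the previous argument can be rerun with $p^{-\eps}$ replaced by $1/\log^2 p$. For the $J$-variant under the assumption $-1 \notin \G$, the interval $J = [p/8, 3p/8)$ is no longer symmetric, so both parities of characters contribute: \eqref{f:cos} together with \eqref{f:S_a_expansion_gen} produce $S_J(\chi) \propto G(\chi)L(1, \ov{\chi}\chi_8)$ for odd $\chi$ (exactly as in the proof of Lemma~\ref{l:discrepancy_Q-1}) and an analogous formula with a character modulo $4$ for even $\chi$, after which the remainder of the argument is identical. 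The main technical delicacy will be a clean bookkeeping of the character count --- splitting by the parity of $\chi(-1)$ and by whether $\chi$ is real or complex --- to see that the excluded cases $n = 2, 4$ are precisely those in which only real characters survive and Siegel's ineffective bound is the best available.
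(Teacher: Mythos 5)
Your proposal is correct and takes essentially the same route as the paper: expand $\D_\xi$ over $\mathcal{X}^*$, apply Parseval on $\F_p^*/\G$, evaluate the interval sums as $G(\chi)L(1,\ov{\chi}\chi_3)$ (with only even $\chi$ surviving since $I=-I$, and $G(\chi)L(1,\ov{\chi}\chi_8)$ for odd $\chi$ on $J$), and lower-bound via Siegel for the real characters and $|L(1,\cdot)|\gg 1/\log p$ for the complex ones, with the same parity/realness bookkeeping that explains the exclusion of $n=2,4$. The only two harmless imprecisions: invoking ``Siegel'' for all $\ov{\chi}\chi_3$ tacitly uses the effective complex-character bound when $\chi$ is complex (exactly as the paper does), and your claimed ``analogous formula with a character modulo $4$'' for even $\chi$ on $J$ is really a combination $\ov{\chi}(2)\bigl(L(1,\ov{\chi})+L(1,\ov{\chi}\chi_4)\bigr)$, but this is never needed because the odd characters alone already give the stated lower bounds.
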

\begin{proof} 
	We have 
	$$
	\G(x) = \frac1n \sum_{\chi \in \mathcal{X}} \chi(x) = \frac1n \sum_{\chi \in \mathcal{X}^*} \chi(x) + \frac{\chi_0(x)}{n} 
	\,,
	$$
	hence 
	\begin{equation}\label{tmp:17.09_1}
	\D_\xi = |\xi \G \cap I| - \frac{|I|}{n} = \frac1n \sum_{\chi \in \mathcal{X}^*} \ov{\chi}(\xi) \sum_{p/3 \le x \le 2p/3} \chi(x)   \,.
	\end{equation}
	Using the expansion \eqref{f:S_a_expansion_gen}, we obtain
\begin{eqnarray*}
	\sum_{p/3 \le x \le 2p/3} \chi(x) &=& S(2/3) - S(1/3) = \frac{G(\chi)}{2\pi i} \sum_{m \neq 0} \frac{\ov{\chi} (m)}{m} (e^{-2\pi i m/3} - e^{-4\pi i m/3}) \\
	&=&
	-\frac{\sqrt{3} G(\chi)}{2\pi} \sum_{m \neq 0} \frac{\ov{\chi} (m) \chi_3 (m)}{m} 
	=
	-\frac{\sqrt{3} G(\chi)}{2\pi} L(1,\ov{\chi} \chi_3) (1+\ov{\chi}(-1)) \,,
\end{eqnarray*}
hence by  \eqref{tmp:17.09_1}
\begin{equation}\label{f:D_xi}
	\D_\xi = -\frac{\sqrt{3}}{2\pi n} \sum_{\chi \in \mathcal{X}^*} (1+\ov{\chi}(-1)) G(\chi) L(1,\ov{\chi} \chi_3) \ov{\chi}(\xi) \,.
\end{equation}
	Thus, by the Parseval formula applied for the quotient group $\F_p^*/\G$ 
\begin{eqnarray}\label{e:L_2}
	\sum_{\xi} \D^2_\xi &=& \frac{3}{4 \pi^2 n} \sum_{\chi \in \mathcal{X}^*} |1+\ov{\chi}(-1)|^2 |G(\chi)|^2 |L(1,\ov{\chi} \chi_3)|^2 \nonumber  \\ 
 &=&
	\frac{3p}{4 \pi^2 n} \sum_{\chi \in \mathcal{X}^*} |1+\ov{\chi}(-1)|^2 |L(1,\ov{\chi} \chi_3)|^2 \,.
\end{eqnarray}
	By Siegel's Theorem, we have 
\begin{eqnarray*}
	\sum_{\xi} \D^2_\xi &\gg_\eps& \frac{p^{1-\eps}}{n} \sum_{\chi \in \mathcal{X}^*} |1+\ov{\chi}(-1)|^2 \\
	&=&
	\frac{p^{1-\eps}}{n}  (2n-2 + \sum_{\chi \in \mathcal{X}^*} (\ov{\chi}(-1) + \chi(-1))) \\
	&\ge& \left(2 - \frac{4}{n} \right) p^{1-\eps}\,, 
\end{eqnarray*}
where the last  inequality follows from equations $\sum_{\chi \in \mathcal{X}^*} \chi(-1) = -1$ if $-1\notin \G$ and $\sum_{\chi \in \mathcal{X}^*} \chi(-1) = n-1$ if $-1\in \G$.  
	
	The upper bound in  \eqref{f:discrepancy_G} follows from  (\ref{e:L_2})  and from a general inequality $|L(1,\chi)| \ll \log p$, which holds for every non--trivial character $\chi$. 
	
	Now let us assume that $n\not=2,4$. We only bound from below the  subsum of 
	 (\ref{e:L_2})  over non--quadratic characters.
	It was proven in  \cite[Theorems 11.4 and 11.11]{Montgomery-Vaughan} that $|L(1,\chi)| \gg \frac{1}{\log p}$ for any complex character  $\chi$, so  

\[
	\sum_{\xi} \D^2_\xi \gg \frac{p}{n\log^2 p} (2n-4-4) \gg \frac{p}{\log^2 p} \left(1 - \frac{4}{n} \right) \,.
\]
	If $n=3$ then we have $-1\in \G$ hence $\sum_{\chi \in \mathcal{X}^*} \chi(-1) = n-1 = 2$  and thus \eqref{f:discrepancy_G_n} is also satisfied.


	Next, we prove  the last part of our proposition. 
	Applying formula \eqref{f:S_a_expansion_gen} again (one can use identity \eqref{f:cos} as well), we obtain 
\begin{eqnarray}\label{f:ugly}
&&\sum_{p/8 \le x \le 3p/8} \chi(x) = S(3/8) - S(1/8) = \frac{G(\chi)}{2\pi i} \sum_{m \neq 0} \frac{\ov{\chi} (m)}{m} (e^{-\pi i m/4} - e^{-3\pi i m/4}) \nonumber \\
&=&
\frac{G(\chi)}{\sqrt{2} \pi i} \sum_{m \neq 0} \frac{\ov{\chi} (m) \chi_8 (m)}{m} 
	+ 
		\frac{G(\chi)}{\pi} \left( \sum_{m \equiv -2 \pmod 8} \frac{\ov{\chi} (m) }{m} - \sum_{m \equiv 2 \pmod 8} \frac{\ov{\chi} (m) }{m} \right)\nonumber \\
&=&
	\frac{G(\chi)}{\sqrt{2} \pi i} (1-\ov{\chi} (-1)) \sum_{m \ge 1} \frac{\ov{\chi} (m) \chi_8 (m)}{m}
- \frac{G(\chi)}{\pi}  (1+\ov{\chi}(-1)) \sum_{m \equiv 2 \pmod 8} \frac{\ov{\chi} (m) }{m} \nonumber \\
	&=&
	\frac{G(\chi)}{\sqrt{2} \pi i} (1-\ov{\chi} (-1)) \sum_{m \ge 1} \frac{\ov{\chi} (m) \chi_8 (m)}{m} 
	- \frac{G(\chi) \ov{\chi} (2)}{2\pi}  (1+\ov{\chi}(-1)) \sum_{m \equiv 1 \pmod 4} \frac{\ov{\chi} (m) }{m} \,.
\end{eqnarray}
Let $\mathcal{X}^*_o$ be the set of all odd characters from $\mathcal{X}^*$, then 
	from $-1\notin \G$ it follows that $|\mathcal{X}^*_o| = n/2$. 
	Hence 
	for all $\chi \in \mathcal{X}^*_o$, we have  
\begin{equation}\label{tmp:20.09_1}
	\sum_{p/8 \le x \le 3p/8} \chi(x) = \frac{G(\chi) \sqrt{2}}{\pi i}  \sum_{m \ge 1} \frac{\ov{\chi} (m) \chi_8 (m)}{m} 
	= \frac{G(\chi) \sqrt{2}}{\pi i}  L(1,\ov{\chi} \chi_8)\,.
\end{equation}
	Thus, by \eqref{tmp:20.09_1}  one obtains
\[
	\sum_{\xi} \left||\xi \G \cap J| - \frac{|J|}{n} \right|^2 \gg \frac{p}{n} \sum_{\chi \in \mathcal{X}^*_o} |L(1,\ov{\chi} \chi_8)|^2  
	=	
	\frac{p}{n} \left(|L(1,\rho \chi_8)|^2 + \sum_{\chi \in \mathcal{X}^*_o \setminus \{\rho\}} |L(1,\ov{\chi} \chi_8)|^2  \right) \gg_\eps {p^{1-\eps}}
\]
This completes the proof. 
	$\hfill\Box$
\end{proof}


\begin{remark}
	It was proven in \cite[page 366]{Montgomery-Vaughan} that for odd characters $\chi \in \mathcal{X}^*$ one has $|L(1,\chi)| \gg (\log p)^{-\cos (\pi/n)}$. 
	Thus, for small $n$ the estimate \eqref{f:discrepancy_G_n} can be further improved. 
	It is also well--known that under the Generalized Riemann Hypothesis a stronger inequality is satisfied  $$(\log \log p)^{-1} \ll |L(1,\chi)| \ll \log \log p.$$  
\end{remark}

For any non--trivial  character $\chi$ one has $|L(1,\chi)| \ll \log p$, therefore by \eqref{f:D_xi}  it follows  that $|\D_{\xi}| \ll \sqrt{p} \log p$. Thus, we can derive the following corollary from Proposition \ref{p:discrepancy_G}.

\begin{corollary}
	If $n \neq 2,4$,
	then there exist $\xi, \eta, \omega  \in \F_p^*/\G$ such that $\D_\xi \gg \sqrt{p}/(n\log^3 p), ~\D_\eta \ll -\sqrt{p}/(n\log^3 p)$ 
	and $|\D_\omega| \gg \sqrt{p}/(\sqrt{n} \log p)$. 
\label{c:pm_D}
\end{corollary}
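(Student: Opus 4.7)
The plan is to combine three ingredients already at hand: (i) the lower bound $\sum_{\xi} \D_\xi^2 \gg p/\log^2 p$ from Proposition \ref{p:discrepancy_G} (valid precisely because $n \neq 2,4$), (ii) the pointwise upper bound $|\D_\xi| \ll \sqrt{p}\log p$, which follows from the expansion \eqref{f:D_xi} together with $|G(\chi)|=\sqrt{p}$, $|1+\ov\chi(-1)|\le 2$, the fact that $|\mathcal{X}^*|=n-1$ cancels the factor $1/n$, and the general bound $|L(1,\chi)| \ll \log p$ for non--principal $\chi$; and (iii) the trivial identity $\sum_{\xi} \D_\xi = 0$.

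The existence of $\omega$ with $|\D_\omega| \gg \sqrt{p}/(\sqrt{n}\log p)$ is then immediate from pigeonhole: since there are exactly $n$ cosets $\xi\in \F_p^*/\G$, the second moment estimate forces some term $\D_\omega^2 \gg p/(n\log^2 p)$.

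For the other two assertions, I would set $M := \max_\xi \D_\xi$, $m := \min_\xi \D_\xi$, and write $P := \sum_{\D_\xi > 0} \D_\xi$. By $\sum_\xi \D_\xi = 0$, we have $P = \sum_{\D_\xi < 0} |\D_\xi|$, which gives the two-sided bound $P \le n\cdot \min(M,|m|)$. Splitting the second moment according to the sign of $\D_\xi$ yields
\[
\sum_{\xi} \D_\xi^2 \;\le\; M\cdot P + |m|\cdot P \;=\; (M+|m|)\,P \;\le\; 2\,\max(M,|m|)\cdot n\,\min(M,|m|).
\]
Combining this with the lower bound $\sum_\xi \D_\xi^2 \gg p/\log^2 p$ from Proposition \ref{p:discrepancy_G} and the pointwise bound $\max(M,|m|) \ll \sqrt{p}\log p$ gives $\min(M,|m|) \gg \sqrt{p}/(n\log^3 p)$, so both the positive and negative extrema $\D_\xi, \D_\eta$ have the required magnitude.

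I do not foresee a real obstacle: given Proposition \ref{p:discrepancy_G}, the argument is a short bookkeeping step that trades the $L_2$ lower bound against the $L_\infty$ upper bound via the sum-to-zero constraint. The only place to be slightly careful is verifying that the pointwise bound $|\D_\xi| \ll \sqrt{p}\log p$ really follows cleanly from \eqref{f:D_xi} without losing a factor of $n$, which it does because the prefactor $1/n$ compensates for summing over $n-1$ characters.
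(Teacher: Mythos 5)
Your proposal is correct and coincides with the paper's (implicit) derivation: the corollary is obtained from exactly the three ingredients you list --- the pointwise bound $|\D_\xi|\ll\sqrt{p}\log p$ from \eqref{f:D_xi} together with $|L(1,\chi)|\ll\log p$, the second--moment lower bound \eqref{f:discrepancy_G_n} of Proposition \ref{p:discrepancy_G}, and $\sum_\xi\D_\xi=0$ --- combined by the same pigeonhole and sign--splitting bookkeeping you describe. The only microscopic point worth recording is that $\max_\xi\D_\xi>0$ and $\min_\xi\D_\xi<0$, which is immediate since the $\D_\xi$ sum to zero and are not all zero by the second--moment bound.
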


Our next theorem provides  an analogous  estimate  to \eqref{f:disc_sf} for an arbitrary multiplicative subgroup.

\begin{theorem}
	Let $\G \subseteq \F_p^*$ be a multiplicative subgroup, let
	$A$ be a subset of $\F_p$  and suppose that $\sf(A) = |A|/3 + \psi$. 
	Then for any $\eps>0$ and for every even character $\eta \in \mathcal{X}^*$ one has 
	\begin{equation}\label{f:disc_sf_G}
	\big| \sum_{x\in A} \eta (x) \big| \ll_\eps  \psi  n^{1/2} p^{1/2+\eps} \,.
	\end{equation} \label{t:disc_sf_G}
\end{theorem}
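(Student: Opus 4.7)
The plan is to mirror the proof of Theorem \ref{t:disc_sf}, replacing the coarse partition $\F_p^* = \mathcal{Q} \cup \mathcal{N}$ by the finer partition of $\F_p^*$ into cosets of $\G$. For every $\xi \in G := \F_p^*/\G$, set
$$
\sigma_\xi := \frac{1}{|\G|}\sum_{x \in \xi\G}|xA \cap I|,\qquad I = [p/3,2p/3).
$$
Grouping the elements of $A$ by their coset and using $\D_\zeta = |\zeta\G \cap I| - |I|/n$, an elementary rearrangement yields
$$
\sigma_\xi = \frac{|A|}{3} + \frac{S_\xi}{|\G|} + O(1),\qquad S_\xi := \sum_{\zeta \in G} a(\zeta)\,\D_{\xi\zeta},\quad a(\zeta) := |A \cap \zeta\G|.
$$
Since $I$ is sum--free, $|xA \cap I| \le \sf(A) = |A|/3 + \psi$ for every $x \in \F_p^*$, and averaging gives the one--sided bound $S_\xi \le M := (\psi + O(1))\,|\G|$ valid for every $\xi \in G$.

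I would then Fourier analyse on $G$, whose dual group is exactly $\mathcal{X}$. Formula \eqref{f:D_xi} identifies the Fourier coefficients of $\D$ on $G$ as
$$
\widehat{\D}(\eta) = -\frac{\sqrt{3}}{2\pi}\bigl(1+\eta(-1)\bigr)\,G(\bar\eta)\,L(1,\eta\chi_3).
$$
For any even $\eta \in \mathcal{X}^*$ the prefactor $1+\eta(-1)=2$ is nonzero, $|G(\bar\eta)|=\sqrt{p}$, and $\eta\chi_3$ is a non--principal Dirichlet character of conductor dividing $3p$, so Siegel's theorem yields $|\widehat{\D}(\eta)| \gg_\eps p^{1/2-\eps}$. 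A direct convolution computation on $G$ gives $\widehat{S}(\eta) = \widehat{\D}(\eta)\cdot \hat a(\bar\eta)$, and since $\eta$ is trivial on $\G$ we have $\hat a(\bar\eta) = \sum_{x \in A}\eta(x)$. Thus the theorem reduces to an upper bound on $|\widehat S(\eta)|$.

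To bound $|\widehat S(\eta)|$ I would exploit the two zero--sum identities available: $\sum_\xi \D_\xi = 0$ forces $\sum_\xi S_\xi = 0$, and $\sum_\xi \bar\eta(\xi) = 0$ for every non--trivial $\eta \in \mathcal{X}^*$. Combining them,
$$
\widehat{S}(\eta) = \sum_\xi \bar\eta(\xi)\,(S_\xi - M),\qquad\text{with } M - S_\xi \ge 0,
$$
so the triangle inequality collapses to the $\ell^1$--estimate
$$
|\widehat{S}(\eta)| \le \sum_\xi (M - S_\xi) = nM = (\psi + O(1))(p-1).
$$
Dividing by $|\widehat{\D}(\eta)|$ then gives the desired bound on $|\sum_{x \in A}\eta(x)|$.

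The main obstacle is precisely the one--sidedness of the sum--free hypothesis: the sum--free set $I$ produces only an upper bound on the averages $\sigma_\xi$, whereas a Fourier coefficient is a two--sided quantity, so a naive application of Parseval to $S_\xi$ would lose control. The trick that makes everything work is that the zero--sum identity $\sum_\xi S_\xi = 0$ together with $\sum_\xi \bar\eta(\xi) = 0$ lets one subtract the constant $M$ freely, converting the one--sided inequality $S_\xi \le M$ into a genuine $\ell^1$ bound on the non--negative function $M - S_\xi$, which is exactly the right object to control non--trivial Fourier modes.
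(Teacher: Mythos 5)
Your argument is correct, and it takes a genuinely different and in fact sharper route than the paper. The paper proves \eqref{f:disc_sf_G} by controlling \emph{all} even characters at once: it reduces the character sum to the $L_2$ quantity $\sum_\xi (a_\xi+a_{-\xi})^2$ via Cauchy--Schwarz (whence the $n^{1/2}$ factor), bounds $\sum_\a |\sum_\xi a_\xi \D_{\a\xi}|^2$ from above by interpolating the $\ell^1$ bound $\sum_\a|\sum_\xi a_\xi\D_{\a\xi}|\ll \psi p$ (obtained by the same one--sided--plus--zero--mean trick you use) against the upper bound $\sum_\xi\D_\xi^2\ll p\log^2 p$, and from below by Parseval on $\F_p^*/\G$ together with \eqref{f:D_xi} and Siegel. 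You instead isolate a single Fourier mode: the convolution identity $\widehat S(\eta)=\widehat\D(\eta)\,\widehat a(\ov\eta)$ on the quotient group, the subtraction of the constant $M$ (legitimate because $\sum_\xi S_\xi=0$ and $\sum_\xi\ov\eta(\xi)=0$) turning the one--sided sum--free constraint into the $\ell^1$ bound $|\widehat S(\eta)|\le nM\ll\psi p$, and division by the single coefficient $|\widehat\D(\eta)|\gg_\eps p^{1/2-\eps}$. This yields $|\sum_{x\in A}\eta(x)|\ll_\eps \psi p^{1/2+\eps}$, i.e.\ you save the factor $n^{1/2}$ in \eqref{f:disc_sf_G}; summing your per--character bound over the even characters of $\mathcal{X}$ and applying Parseval also recovers the $L_2$ statement \eqref{f:disc_sf_G_intr} with the same strength as the paper. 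Two minor points to tidy up: (i) Siegel's theorem only applies when $\eta\chi_3$ is real, so for complex even $\eta$ you should invoke the bound $|L(1,\eta\chi_3)|\gg 1/\log p$ for complex characters (as the paper does, citing Montgomery--Vaughan, Theorems 11.4 and 11.11); this still gives $\gg_\eps p^{-\eps}$ uniformly, so nothing is lost; (ii) one should note, as the paper does, that one may assume $0\notin A$ and that $\psi\gg 1$, so that the $O(1)$ terms coming from $|I|/(p-1)-1/3$ are absorbed into $\psi$.
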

\begin{proof} 
	Without losing  generality, we can assume that $0 \notin A$. 
	For  $\xi \in \F^*_p/\G$ put $A_\xi = \xi \G \cap A$ and 
	$$
		a_\xi = |A_\xi| - \frac{|A||\G|}{p-1} = |\xi \G \cap A| - \frac{|A|}{n} \,.
	$$
	Clearly,  $\sum_{\xi} a_\xi = 0$. 
	First, let us assume that $-1 \notin \G$,
	then 
	the quotient group $\F^*_p/\G$ can be written   as  $\F^*_p/\G = H \bigsqcup (-H)$ for  some set $H \subset \F^*_p/\G$ with $H\cap (-H)=\emptyset.$
	By our assumption $\eta (-1) = 1$, we deduce that
\begin{eqnarray*}
\sum_{x\in A} \eta (x) &=& \sum_{\xi} \sum_{x\in A_\xi} \eta  (x) = \sum_{\xi} \eta  (\xi) |A_\xi| = \sum_{\xi} \eta  (\xi) a_\xi \\
	&=& 
		\sum_{\xi\in H} \eta  (\xi) ( a_\xi  + \eta (-1) a_{-\xi} ) = \sum_{\xi\in H} \eta  (\xi) ( a_\xi  + a_{-\xi} ) \,.
\end{eqnarray*}
	and by the Cauchy--Schwarz inequality 
	\begin{equation}\label{f:chi_a_xi}
		|\sum_{x\in A} \eta (x) |^2 \le \frac12 n \cdot \sum_{\xi} (a_\xi + a_{-\xi})^2\,. 
	\end{equation}
	If $-1 \in \G$ then $a_\xi = a_{-\xi}$ and the inequality  \eqref{f:chi_a_xi} holds  as well.

	Again let $I = [p/3,2p/3)$. 
	For any $\a \in \F^*_p/\G$ in view of $\sum_{\xi} a_\xi =  \sum_{\xi} \D_{\xi} = 0$, we get  
\begin{eqnarray}\label{tmp:18.09_2}
	|\G|^{-1} \sum_{x \in \a \G} |xA \cap I| &=& |\G|^{-1}  \sum_\xi \sum_{x \in \a \G} |xA_\xi  \cap I| 
	= |\G|^{-1}  \sum_\xi |A_\xi| \left(\D_{\a \xi} + \frac{|I|}{n} \right) \\
	&=& 
	\frac{|A||I|}{p-1} +  |\G|^{-1}  \sum_\xi |A_\xi| \D_{\a \xi} = \frac{|A||I|}{p-1} + |\G|^{-1}  \sum_\xi a_\xi \D_{\a \xi} \,. \nonumber
\end{eqnarray}
	By $\sf(A) = |A|/3 + \psi$  the left--hand side of \eqref{tmp:18.09_2} does not exceed  $|A|/3+ \psi$, which implies
	that for any  $\a \in \F^*_p/\G$ the following inequality holds 
\begin{equation*}
	\sum_\xi a_\xi \D_{\a \xi} \le \psi |\G|+O(|A||\G|/p) \,.
\end{equation*}
	Since $\sum_{\xi} a_\xi =  \sum_{\xi} \D_{\xi} = 0$ it follows that $\sum_\a \sum_\xi a_\xi \D_{\a \xi}=0$ and 
\begin{equation*}
		\sum_\a |\sum_\xi a_\xi \D_{\a \xi}| \le 2\psi |\G| n +O(|A|/p)\le 2 \psi p +O(|A|) \ll \psi p 
\end{equation*}
	because $\psi \gg 1$. 
	Splitting 
	the set  $\F^*_p/\G = H \bigsqcup (-H)$ if it is possible and 
	using  
	the property $\D_{\xi} = \D_{-\xi}$ and 
	the last inequality we deduce that 
\begin{equation}\label{f:D_via_sf-}
\sum_{\a} |\sum_\xi a_\xi \D_{\a \xi}|^2 = \sum_{\a \in H} |\sum_\xi \D_{\a \xi} (a_\xi + a_{-\xi})|^2 
\ll 
	\psi p \cdot \max_{\a \in H} |\sum_\xi \D_{\a \xi} (a_\xi + a_{-\xi})| \,.
\end{equation}
	Again, if $-1\in \G$, then \eqref{f:D_via_sf-} also takes place. 
Therefore, by the
upper bound  \eqref{f:discrepancy_G}  and \eqref{f:D_via_sf-} we have
\begin{equation}\label{f:D_via_sf}
	\sum_{\a} |\sum_\xi a_\xi \D_{\a \xi}|^2 \ll \psi \left( \sum_{\xi} (a_\xi + a_{-\xi})^2 \right)^{1/2} \cdot p^{3/2} \log p \,.
\end{equation}
	On the other hand, from  \eqref{f:D_xi} and the Parseval identity, we obtain 
\begin{equation}\label{tmp:25.09_1}
	\sum_{\a} |\sum_\xi a_\xi \D_{\a \xi}|^2 
		= 
			\frac{3}{4\pi^2 n} \sum_{\chi \in \mathcal{X}^*} |1+\ov{\chi}(-1)|^2 |G(\chi)|^2 |L(1,\ov{\chi} \chi_3)|^2  |\sum_{\xi} a_\xi \ov{\chi} (\xi) |^2 \,.
\end{equation}
	Applying  Siegel's Theorem one more time  and using $\sum_{\xi} a_\xi =0$, we get
\begin{eqnarray}\label{f:E(A,D)_below}
	\sum_{\a} |\sum_\xi a_\xi \D_{\a \xi}|^2 &\gg_\eps& \frac{p^{1-\eps}}{n}  \sum_{\chi \in \mathcal{X}^*} |1+\ov{\chi}(-1)|^2 |\sum_{\xi} a_\xi \ov{\chi} (\xi) |^2 \nonumber\\
	&=&
	\frac{p^{1-\eps}}{n}  \sum_{\chi \in \mathcal{X}} |1+\ov{\chi}(-1)|^2 |\sum_{\xi} a_\xi \ov{\chi} (\xi) |^2  \,.
\end{eqnarray}
	Expanding  the last sum, we derive 
\begin{equation}\label{f:final_a_xi}
	\sum_{\a} |\sum_\xi a_\xi \D_{\a \xi}|^2 \gg_\eps p^{1-\eps} \sum_{\xi} (a_\xi + a_{-\xi})^2 \,.
\end{equation}
	Combining \eqref{f:D_via_sf} and  \eqref{f:final_a_xi} we have
\[
	\sum_{\xi} (a_\xi + a_{-\xi})^2 \ll_\eps \psi^2 p^{1+3\eps} \,.
\]
	Substituting the last formula in \eqref{f:chi_a_xi}, we obtain 
\[
	|\sum_{x\in A} \chi(x) |^2  \ll_\eps n \psi^2  p^{1+3\eps}
\]
	as required. 
$\hfill\Box$
\end{proof}

\bigskip




\noindent The last  result in this section provides an estimate of  $\sf(A)$ for sets with a very small product set.

\begin{corollary}
		Let $A\subseteq \F_p$ be  a set such that $|A| \le \d p/16$ and  $|AA| \le (2-\d)|A|$ for some $\d \in (0,1]$. 
	Suppose that $\sf(A) = |A|/3 + \psi$
	then for any $\eps>0$  one has 
	\begin{equation*}\label{f:Kneser_appl}
		\psi \gg_\eps \d^{1/2} |A| p^{-1/2 - \eps}   \,.
	\end{equation*} 
\label{c:Kneser_appl}
\end{corollary}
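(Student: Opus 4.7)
The plan is to combine Theorem~\ref{t:main_intr} with the structural consequence of a small product set: Kneser's theorem will force $A$ to be concentrated on very few cosets of a large multiplicative subgroup, and this concentration in turn forces the $L^2$-discrepancy to be large. First I would apply Kneser's theorem to the product $AA$ inside $\F_p^*$. Writing $H$ for the stabilizer $\{h\in\F_p^*:h\cdot AA=AA\}$, Kneser gives $|AA|\ge 2|AH|-|H|$, while $|H|\le |AA|$ since $AA$ is a union of $H$-cosets. Feeding in $|AA|\le (2-\delta)|A|$ and using $|AH|=k|H|\ge |A|$, one should extract simultaneously $|H|\ge\delta|A|$ and $k\le 1/\delta$, i.e.\ $A$ meets at most $1/\delta$ cosets of $H$.

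Since Theorem~\ref{t:main_intr} requires $-1\in\G$, I would next pass from $H$ to $\G:=\langle H,-1\rangle$, which at most doubles the subgroup: $|\G|\le 2|H|\le 4|A|\le \delta p/4$ by the assumption $|A|\le\delta p/16$. Hence $n:=(p-1)/|\G|\ge 3/\delta$, and since every $\G$-coset is a union of $H$-cosets, $A$ still lies in at most $k'\le k\le 1/\delta$ cosets of $\G$; in particular $n\ge 3k'$ comfortably.

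Writing $A_\xi=A\cap\xi\G$ and $a_\xi=|A_\xi|-|A||\G|/(p-1)$, Cauchy--Schwarz gives $\sum_\xi|A_\xi|^2\ge |A|^2/k'$, so
\[
\sum_\xi a_\xi^2 \;=\; \sum_\xi|A_\xi|^2-\frac{|A|^2}{n}\;\ge\; |A|^2\left(\frac{1}{k'}-\frac{1}{n}\right)\;\gg\;\delta|A|^2,
\]
using $k'\le 1/\delta$ and $n\ge 3k'$. Theorem~\ref{t:main_intr} applied to $\G$ delivers the matching upper bound $\sum_\xi a_\xi^2\ll_\eps \psi^2 p^{1+\eps}$, and combining the two yields $\psi\gg_\eps \delta^{1/2}|A|p^{-1/2-\eps}$ after renaming $\eps$.

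The main obstacle is the Kneser step: one has to squeeze two pieces of information (namely $|H|\ge\delta|A|$ and the concentration of $A$ on at most $1/\delta$ cosets) out of the single input $|AA|\le(2-\delta)|A|$. The constant $16$ in the hypothesis $|A|\le\delta p/16$ is then precisely what keeps $n\ge 3k'$ after the $-1$-symmetrization, so that the averaging correction $|A|^2/n$ does not swamp the Cauchy--Schwarz lower bound $|A|^2/k'$.
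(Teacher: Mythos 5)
Your proposal is correct and follows essentially the same route as the paper: Kneser's theorem applied to $AA$ yields a stabilizer subgroup of size $\ge \d|A|$ with $A$ concentrated on at most $O(1/\d)$ of its cosets, one symmetrizes to a subgroup containing $-1$, and then Cauchy--Schwarz against the $L^2$-discrepancy bound of Theorem \ref{t:main_intr} gives the result, with $|A|\le \d p/16$ ensuring the mean term is harmless. The only difference is bookkeeping: the paper applies Cauchy--Schwarz directly to the centered sums $\sum_{\xi\in X} a_\xi$, while you center after Cauchy--Schwarz via the $1/k'-1/n$ comparison, which amounts to the same estimate.
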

\begin{proof} 
	Kneser's theorem  \cite{Kneser}  implies that there is a subgroup $\G \subseteq \F_p^*$ with  $|\G| \le (2-\d) |A|$ such that $AA$ is covered 
	by at most $\frac{2}{\d}-1 := t$ translates of $\G$. Put $\G_* = \G \cup (-\G)$ then clearly $A\sbeq \bigcup_{x\in X}x\G_*$ for some set $X$ of size at most $t.$
	Using the Cauchy--Schwarz inequality, Theorem \ref{t:main_intr} and our assumption  $|A| \le \d p/16$, we have 
\[
	\frac{\d |A|^2}{8}
	\le 
	\frac{|A|^2}{4|X|}
	\le
	\frac{1}{|X|}
	\left( |A| - \frac{|A| |X||\G_*|}{p-1} \right)^2  
		\le 
			\sum_{\xi \in X} \left| |A \cap \xi \G_*| - \frac{|A||\G_*|}{p-1} \right|^2  \ll_\eps  \psi^2 p^{1+\eps} \,.
\]  
	This completes the proof.
$\hfill\Box$
\end{proof}

\section{Sum-free subsets in multiplicative subgroups}

In view of the results in the previous section   a natural problem of determining $\sf (\Gamma)$ for a multiplicative subgroup $\Gamma \sbeq \Fp^*$ arises. It is well--known that large multiplicative subgroups are pseudo--random sets, as they have small Fourier coefficients. Therefore, one can expect that $\sf (\Gamma)\le (1/3+o(1))|\Gamma|.$ Our next theorem will confirm this intuition.  The idea behind the proof is that if $\G$ contains a large sum-free set, then we show that there is a sum--free set of $\Fp$ roughly with the same density.  Our argument is based on  Fourier approximation method.

\begin{theorem}\label{t: s-f-G} Let $\G \subseteq \F_p^*$ be a multiplicative subgroup such that $|\Gamma|\gg \frac{(\log\log p)^2}{\sqrt{\log p}}p.$ Then 
$$\sf (\Gamma)\le (1/3+o(1))|\Gamma|.$$ \label{t:sf_G}
\end{theorem}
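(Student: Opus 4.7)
The plan is to argue by contradiction: suppose $S\subseteq\G$ is sum-free with $|S|=\alpha|\G|$ and $\alpha\ge 1/3+\eta$ for some fixed $\eta>0$, and construct via Fourier-analytic transference a sum-free set $T\subseteq\F_p$ with $|T|/p\ge\alpha-o(1)$; since Cauchy--Davenport forces $|T|\le (p+1)/3$, this gives $\alpha\le 1/3+o(1)$, the desired contradiction for $p$ large. The high-level idea, indicated in the theorem's preamble, is that if $\G$ contains a large sum-free set $S$, one should be able to inflate it to a sum-free set in $\F_p$ of essentially the same density $|S|/|\G|$, rather than the naive $|S|/p=\alpha|\G|/p$.

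The key analytic input is the Fourier pseudorandomness of $\G$. Decomposing $1_\G(x)=\tfrac{1}{n}\sum_{\chi\in\mathcal{X}}\chi(x)$ on $\F_p^*$ and invoking $|G(\chi)|=\sqrt p$, we get $|\widehat{1}_\G(\xi)|\le\sqrt p$ for every $\xi\ne 0$. Equivalently, the rescaled majorant $\nu=(p/|\G|)1_\G$ has mean $1$ on $\F_p$ and Fourier coefficients $|\widehat\nu(\xi)|\le p^{3/2}/|\G|$ off zero, which is $o(p)$ precisely under the hypothesis $|\G|\gg p(\log\log p)^2/\sqrt{\log p}$.

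Now rescale $S$: set $g=(p/|\G|)1_S$, a non-negative function with $g\le\nu$ pointwise and mean $\alpha$ on $\F_p$. Since $S$ is sum-free, $\sum_x g(x)(g*g)(x)=0$. A Green--Tao-type transference argument, using the pseudorandomness of $\nu$ from the previous step to peel off the ``linear'' part of $g$, produces a bounded function $g^{\sharp}:\F_p\to[0,1]$ of the same mean $\alpha$ with $\sum_x g^{\sharp}(x)(g^{\sharp}*g^{\sharp})(x)=o(\alpha^3 p)$; the density has ``transferred'' from the $\F_p$-scale $\alpha|\G|/p$ up to the $\G$-scale $\alpha$. Setting $T=\{x\in\F_p:g^{\sharp}(x)\ge\alpha-\delta\}$ for small $\delta>0$, Markov's inequality gives $|T|/p\ge\alpha-o(1)$, while the near-sum-freeness of $g^{\sharp}$ forces $T$ to contain at most $o(|T|^3/p)$ Schur triples; pruning one vertex from each yields a genuine sum-free $T'\subseteq\F_p$ with $|T'|/p\ge\alpha-o(1)$, completing the contradiction.

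The main obstacle will be the transference step: one has to track Fourier errors carefully enough that the density truly increases from $\alpha|\G|/p$ to $\alpha$, rather than being stuck at the smaller scale. This is where the Gauss sum bound $|G(\chi)|=\sqrt p$ combines with the $L$-function lower bound $|L(1,\chi)|\gg 1/\log p$ for non-real characters cited from \cite{Montgomery-Vaughan}; the specific shape of the hypothesis $|\G|\gg p(\log\log p)^2/\sqrt{\log p}$ is calibrated so that the cumulative error in the transference, combining both the Gauss sum loss and the $L$-function loss summed over all non-trivial characters in $\mathcal{X}$, is $o(1)$ in the final density.
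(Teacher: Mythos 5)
Your overall strategy --- transfer a dense sum-free subset of $\G$ to a (nearly) sum-free subset of $\F_p$ of the same relative density, then invoke Cauchy--Davenport --- is exactly the paper's strategy, and your starting point (the bound $|\widehat{1_\G}(\xi)|\le\sqrt p$ off zero and the majorant $\nu=(p/|\G|)1_\G$) matches the paper's. But the execution has genuine gaps. First, the step ``set $T=\{x: g^{\sharp}(x)\ge\alpha-\delta\}$; Markov's inequality gives $|T|/p\ge\alpha-o(1)$'' is false: a $[0,1]$-valued function of mean $\alpha$ can equal $1$ on a set of density $O(\delta)$ and $\alpha-2\delta$ elsewhere, so that level set can have density $O(\delta)$ rather than $\alpha-o(1)$. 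One must convert $g^{\sharp}$ into a genuine set without losing density, which is why the paper uses Green's Beck--Spencer-type discretization (a set $S$ with $\widehat{S}$ uniformly within $O(\sqrt p)$ of $\widehat{h}$), not a level set. Second, ``pruning one vertex from each'' Schur triple does not work: your triple count is only $o(|T|^3/p)=o(p^2)$, so naive pruning could delete far more than $|T|$ elements; one needs the arithmetic removal lemma (as in the paper) to pass from $o(p^2)$ triples to a sum-free subset of size $(1-o(1))|T|$.

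Third, and most importantly for the quantitative statement, the transference itself is left as a black box (``a Green--Tao-type transference argument''), and your account of where the hypothesis $|\G|\gg p(\log\log p)^2/\sqrt{\log p}$ comes from is wrong: $L$-function bounds play no role in this theorem (they belong to the discrepancy results of the earlier sections), and the Gauss-sum bound alone already makes $\widehat{\nu}$ small off zero as soon as $|\G|/\sqrt{p}\to\infty$, far below the stated threshold. In the paper the threshold is dictated by making the dense-model step quantitative: one smooths $a=m\cdot A$ by a Bohr set $B=B(\Spec_\t(A),\t)$, bounds $f=a*\b*\b$ by $1+\d$ with $\d=m\sqrt{p}/|B|$, and needs both $\d\le\t$ and $\t m\to 0$; Chang's spectral lemma gives $|B|\ge(\t/C\log m)^{C\t^{-2}\log m}p$, and optimizing forces $\t\asymp\log\log p/\sqrt{\log p}$, hence $m\ll\sqrt{\log p}/(\log\log p)^2$. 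Whatever dense-model theorem you invoke must come with this kind of bookkeeping, since that is precisely where the hypothesis on $|\G|$ enters; as written, your argument yields at best a qualitative statement for substantially larger subgroups than claimed.
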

\begin{proof} 
Let us recall a simple bound on the Fourier coefficients of a multiplicative subgroup. For every $\xi_0\in \Fp^*,$ we have
$$|\G||\FF \G(\xi_0)|^2\le \sum_\xi|\FF \G(\xi)|^2= p|\G|\,,$$
hence $|\FF \G(\xi_0)|\le \sqrt{p},$ and it follows that
$$|m\FF\Gamma(\xi)-\FF\Fp(\xi)|\le m\max_{\xi\not=0}|\FF\Gamma(\xi)|\le m\sqrt{p}\,,$$
where $m:=p/|\Gamma|.$ Let $A$ be a sum--free subset of $\Gamma$ of the maximum size and put $|A|=\g |\G|.$ We show that there exists a sum--free subset in $\Fp$ roughly with the same density. For $\t>0$ define by
$$\Spec=\Spec_\t(A):=\big\{\xi: |\FF A(\xi)|\ge \t|A|\big \}$$
the $\t$--spectrum of $A$ and let
$$B=B(\Spec,\t)=\{b\in \Fp: \|b\xi/p\|\le \t \text{ for every } \xi\in\Spec\}\,.$$
 Furthermore, let us denote  $\b(x)=\frac1{|B|}B(x), ~a(x)=m\cdot A(x), ~g(x)=m\cdot \G(x)$ and define 
$$f=a*\b*\b\,.$$
Notice that $\FF f=\FF a\cdot \FF \b^2,$ hence  
\begin{equation}\label{fourier-approx}
|\FF a(\xi)-\FF f(\xi)|=|\FF a(\xi)||1-\FF \b(\xi)^2|\ll \t m|A|\,,
\end{equation}
for any $\xi\in \Fp.$
We will use the function $f$ to construct a subset of $\Fp$ with similar properties and therefore  we need to  estimate $\|f\|_1$  and  bound $f$ from above.  By Fourier inversion and by \eqref{fourier-approx}, we have
$$\big|\sum_t f(t)-m|A|\big|=\big|\sum_t f(t)-\sum_t a(t)\big|\ll \t m |A|\,,$$
and
\begin{eqnarray*}
f(t)&=& \frac1p\sum_\xi \FF f(\xi)e(-\xi t/p)=\frac1p\sum_\xi \FF a(\xi) \FF \b^2(\xi)e(-\xi t/p)\\
&\le&\frac1p \sum_\xi \FF g(\xi) \FF \b^2(\xi)e(-\xi t/p)\le \frac1p\sum_\xi \FF \Fp(\xi) \FF \b^2(\xi)e(-\xi t/p)+ 
\frac1p m \sqrt{p} \sum_\xi |\FF \b(\xi)|^2\\
&\le& 1+\frac{m\sqrt{p}}{|B|}=:1+\d\,.
\end{eqnarray*}
and we assume that $\g\le \t$ (a choice of parameters at the end of the proof will guarantee this inequality).

For  a function $w: \Fp\rightarrow {\mathbb R}$ put
$$T(w):=\sum_{x+y=z}w(x)w(y)w(z)\,.$$ 
We show by \eqref{fourier-approx}  that  $T(f)$ is small
\begin{eqnarray*}
T(f)&=&T(f)-T(a)=\frac1p\sum_\xi\FF f(\xi)|\FF f(\xi)|^2-\frac1p\sum_\xi\FF a(\xi)|\FF a(\xi)|^2\\
&\ll & \frac1p\sum_\xi\FF a(\xi)(|\FF f(\xi)|^2-|\FF a(\xi)|^2)+\frac1p\t m |A|\sum_\xi|\FF f(\xi)|^2\ll \t m^3|A|^2\,.
\end{eqnarray*}

Let 
$$h:=\frac1{1+\d}\cdot f$$ then we see that $h: \Fp\rightarrow [0,1],~\sum_t h(t)=m|A|+O(\t m|A|)$ and $T(h)\ll \t m^3|A|^2.$
By a corollary to Beck--Spencer theorem (see \cite{Green-bs}) there is a set $S\sbeq \Fp$  such that  
$$|S|=\lfloor \sum_t h(t)\rfloor=m|A|+O(\t m|A|)$$ and for every $\xi \in \Fp$
$$\big|\FF S(\xi)-\FF h(\xi)\big|\ll \sqrt{p}\,.$$ 
By the above property, we have 
$$\big |T(S)-T(a)\big|\le \big |T(S)-T(h)\big| +(1+\d)^{-3}\big |T(f)-T(a)\big|\ll \t m|S|^2\,.$$
Now it is sufficient to apply arithmetic removal lemma \cite{Green-removal} to find  set $S'\sbeq S$ such that 
$$|S'|\ge (1-\eps)|S|\ge (1-\eps+O(\t))m\,\sf(\Gamma)$$
 and $S'$ does not contain any solution to $x+y=z,$  where $\eps\rightarrow 0$ as $\t m\rightarrow 0.$ However, it immediately follows from the Cauchy--Davenport theorem that every sum--free set in $\Fp$ is of size at most $(p+1)/3$, so 
$$\sf(\Gamma)\le (1+O(\eps))\frac{|S'|}{m}\le (1/3+O(\eps+\t ))|\Gamma|\,.$$

To  finish the proof it is enough to find a choice of $\t$ such that $\t m\rightarrow 0$ and $\d\le \t.$
We can  assume that $|A|\ge \frac{|\G|}{4} \ge\frac{p}{4m}$ hence by Chang's spectral lemma \cite{chang} and by  a well--known lower bound for the size of a Bohr set, we have 
$$|B|\ge (\t/C\log m)^{C\t^{-2}\log m}p\,,$$
for some constant $C>0.$
We show that one can take $\t=c\frac{\log \log p}{\sqrt{\log p}}$ for a sufficiently small constant $c>0.$ 
By our assumption $m\ll \frac{\sqrt{\log p}}{(\log\log p)^2},$ so 
$\t m\rightarrow 0$ and
$$\d=\frac{m\sqrt{p}}{|B|}\le \frac{m}{ (\t/C\log m)^{C\t^{-2}\log m}\sqrt{p}}\le \t\,,$$
provided that $c$ is small enough.
This completes the proof.$\hfill\Box$
\end{proof}

\begin{remark} To keep the statement of Theorem \ref{t: s-f-G} simple  we pick  a suboptimal $m$; the optimal choice is $m=o(\t^{-1}).$ It is worth noting that Alon and Bourgain \cite{AB} constructed a sum-free subgroup of $\Fp^*$ of size $\gg p^{1/3}.$
\end{remark}

\section{Concluding remarks}

Bourgain showed \cite{Bourgain} that 
\begin{equation}\label{f:Wiener}
	\sf_2^{\Z} (A) \ge \frac{|A|}{3} + \frac{c \| A\|_{\Re W}}{\log |A|} \,,
\end{equation}
where $c>0$ is an absolute constant and $\| A\|_{\Re W} = \int_{0}^{1} |\sum_{a\in A} \cos (2\pi a x)|\, dx$.
One can verify that the same argument  works in the finite fields and the inequality \eqref{f:Wiener} holds for $\sf (A)$ with 
\begin{equation*}\label{f:RWiener_Fp}
	\| A\|_{\Re W} = p^{-1} \sum_x |\sum_{a\in A} \cos (2\pi a x/p)| \,.
\end{equation*}
Note that $\| A\|_{\Re W}$ can differ a lot from the usual Wiener norm $\| A\|_{W} = p^{-1} \sum_x |\FF{A} (x)|$, for example  if $p\equiv 3 \pmod 4$ then  $\| \mathcal{Q} \|_{\Re W} < 1,$ while  $\| \mathcal{Q} \|_{ W} \gg \sqrt{p}.$
However, for symmetric sets  $A=-A$ we have  $\| A\|_{\Re W}=\| A\|_{W}$ and
whence if $\sf(A) = |A|/3+\psi$ then
\[
	\left| |\xi \G \cap A| - \frac{|A||\G|}{p} \right| = \frac{1}{p} \sum_{x \neq 0} \FF{A}(x) \FF{\G} (x\xi) \le \| A\|_W \cdot \max_{x\neq 0} |\FF{\G} (x) |\,,
\]
so
\[
	\left| |\xi \G \cap A| - \frac{|A||\G|}{p} \right| \ll \psi \log |A| \cdot  \max_{x\neq 0} |\FF{\G} (x) | \,.
\]
Nevertheless,  the inequality above does not improve the  $L_2$--bound given in \eqref{f:disc_sf_G_intr}. 

\bigskip

\noindent {Faculty of Mathematics and Computer Science,\\ Adam Mickiewicz
University,\\ Umul\-towska 87, 61-614 Pozna\'n, Poland\\} {\tt
schoen@amu.edu.pl}

\bigskip

\noindent{I.D.~Shkredov\\
Steklov Mathematical Institute,\\
ul. Gubkina, 8, Moscow, Russia, 119991}
\\
and
\\
IITP RAS,  \\
Bolshoy Karetny per. 19, Moscow, Russia, 127994\\
and 
\\
MIPT, \\ 
Institutskii per. 9, Dolgoprudnii, Russia, 141701\\
{\tt ilya.shkredov@gmail.com}

\end{document}